\documentclass[a4paper,12pt]{amsart}

\pagestyle{headings}

\oddsidemargin=0cm \evensidemargin=0cm%
\textwidth=450pt

\setlength{\footskip}{20pt}

\makeindex

\setcounter{tocdepth}{1}
\usepackage{amsfonts,graphics,amsmath,amsthm,amsfonts,amscd,amssymb,amsmath,latexsym,euscript, enumerate}
\usepackage{epsfig}
\usepackage{flafter}
\usepackage[all,cmtip,line]{xy}
\usepackage{array}
\usepackage[english]{babel}
\usepackage{overpic}
\usepackage{subfig}
\usepackage{multirow}
\usepackage{microtype}
\usepackage{wrapfig}
\usepackage{ulem}

\usepackage[dvipsnames,svgnames,table]{xcolor}
\usepackage{graphicx}
\usepackage[hyphens]{url}
\usepackage{hyperref}

\newtheorem{theorem}{Theorem}[section]

\newtheorem{proposition}[theorem]{Proposition}

\newtheorem*{theorem*}{Theorem}

\theoremstyle{plain}

\theoremstyle{definition} 
\newtheorem{definition}[theorem]{Definition}
\newtheorem{definition-lemma}[theorem]{Definition-Lemma}

\newtheorem{example}[theorem]{Example}

\newtheorem{remark}[theorem]{Remark}

\numberwithin{equation}{section}

\newcommand{\C}{\mathbb{C}}
\newcommand{\R}{\mathbb{R}}

\newcommand{\Q}{\mathbb{Q}}

\def\P{\mathbb{P}}

\DeclareMathOperator{\ord}{ord}

\newcommand{\bm}{\mathbf B_-}  
\newcommand{\B}{\mathbf B}
\newcommand{\bp}{\mathbf B_+}  
\def\pnklt{\operatorname{pNklt}}

\def\mult{\operatorname{mult}}

\def\Supp{\operatorname{Supp}}

\def\pnklt{\operatorname{pNklt}}
\def\nklt{\operatorname{Nklt}}
\def\nnef{\operatorname{NNef}}

\usepackage{mathtools}

\DeclarePairedDelimiterX{\inp}[2]{\langle}{\rangle}{#1, #2}
\DeclarePairedDelimiterX{\norm}[1]{\lVert}{\rVert}{#1}

\usepackage{mathtools}





\title[Anticanonical divisor with good asymptotic base loci]
{Anticanonical divisor with good asymptotic base loci}

\begin{document}

\author[S.~Choi]{Sung Rak Choi}
\author[S.~Jang]{Sungwook Jang}
\author[D.-W.~Lee]{Dae-Won Lee}
\address{Department of Mathematics, Yonsei University, 50 Yonsei-ro, Seodaemun-gu, Seoul 03722, Republic of Korea}
\email{sungrakc@yonsei.ac.kr}
\address{Center for Complex Geometry, Institute for Basic Science, 34126 Daejeon, Republic of Korea}
\email{swjang@ibs.re.kr}
\address{Department of Mathematics, Ewha Womans University, 52 Ewhayeodae-gil, Seodaemun-gu, Seoul 03760, Republic of Korea}
\email{daewonlee@ewha.ac.kr }

\thanks{S. Choi and D.-W. Lee are partially supported by Samsung Science and Technology Foundation under Project Number SSTF-BA2302-03. D.-W. Lee is also partially supported by Basic Science Research Program through the National Research Foundation of Korea (NRF) funded by the Ministry of Education (No. RS-2023-00237440 and 2021R1A6A1A10039823).}

\subjclass[2010]{Primary 14E30; Secondary 14J17, 14J45.}
\keywords{Anticanonical minimal model, Fano type variety, Potential pair}

\begin{abstract}
  In this paper, we give a characterization of Fano type varieties in terms of the asymptotic base loci of $-(K_X+\Delta)$. We also show that for a potentially lc pair $(X,\Delta)$, if no plc centers are contained in the augmented base locus $\bp(-(K_X+\Delta))$, then $(X,\Delta)$ has a good $-(K_X+\Delta)$-minimal model. This gives an analogous result of Birkar--Hu on the existence of good minimal models.
\end{abstract}

\maketitle



\section{Introduction}\label{sect:intro}

The minimal model program has been the most powerful tool in the classification of algebraic varieties. Several variations of the program have been developed, each serving its purpose in addressing the classification problem.
In the classical minimal model program, much attention has been paid to the study of birational modifications of varieties $X$ with respect to the canonical divisor $K_X$, especially in the case where $K_X$ is effective. It is well known that a remarkable breakthrough has been made by \cite{bchm} in the search for the minimal models of general type varieties. Varieties $X$ with effective anticanonical divisors $-K_X$ are also of importance. Among them, Fano varieties are the most well-studied classes of varieties due to their rich geometry with applications in various areas of mathematics. Fano type varieties are generalizations of Fano varieties and they share many similar properties with Fano varieties. By definition \cite[Lemma-Definition 2.6]{PS}, a Fano type variety $X$ is equipped with an effective divisor $\Delta$ such that $(X,\Delta)$ is a klt log Fano pair. It is known by \cite{bchm} that Fano type varieties are Mori dream spaces. Thus, we can run the $-K_X$-minimal model program on Fano type varieties and we expect to obtain Fano varieties as the final outcome. In particular, Fano type varieties can be considered as birational predecessors of Fano varieties. Thus, determining which varieties are of Fano type is an important task.

\medskip

We consider the following generalization of the notions of the singularities of pairs.
Let $(X,\Delta)$ be a pair with $-(K_{X}+\Delta)$ pseudoeffective and $E$ a prime divisor over $X$. The potential discrepancy $\bar{a}(E;X,\Delta)$ at a prime divisor $E$ over $X$ is defined as $\bar{a}(E;X,\Delta)=a(E;X,\Delta)-\sigma_{E}(-(K_{X}+\Delta))$, where $a(E;X,\Delta)$ is the discrepancy of $(X,\Delta)$ at $E$ and $\sigma_{E}(-(K_{X}+\Delta))$ is the asymptotic divisorial valuation of $-(K_X+\Delta)$ along $E$. We say that the pair $(X,\Delta)$ is potentially klt (resp. potentially lc) if $\inf_{E}\bar{a}(E;X,\Delta)>-1$ (resp. $\ge -1$), where the infimum is taken over all prime divisors $E$ over $X$.  See Section \ref{sect:prelim} for the precise definitions.

It is proved in \cite{CJK} and \cite{CP} that a variety $X$ is of Fano type if and only if there exists an effective divisor $\Delta$ on $X$ such that $(X,\Delta)$ is potentially klt and $-(K_X+\Delta)$ is big. (See Section \ref{sect:prelim} for the definitions of potentially klt (pklt) and potentially lc (plc) pairs.) It is easy to see that in this characterization, we can allow $-(K_X+\Delta)$ to be pseudoeffective as long as $\Delta$ is big (Remark \ref{rem:pklt peff=FT}). We can also allow  $(X,\Delta)$ to be plc or lc as long as the plc centers or lc centers are in general position. The following is the main result of this paper.

\begin{theorem}\label{thrm:main}
Let $(X,\Delta)$ be a $\Q$-factorial plc pair with a klt variety $X$. Suppose that the following conditions are satisfied:
\begin{enumerate}[(1)]
    \item  $-(K_X+\Delta)$ is big,
    \item  no lc centers of $(X,\Delta)$ are contained in $\bm(\Delta)$, and
    \item  no plc centers of $(X,\Delta)$ are contained in $\bm(-(K_X+\Delta))$.
  \end{enumerate}
Then $X$ is of Fano type.
\end{theorem}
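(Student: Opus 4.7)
The overall strategy is to modify the boundary $\Delta$ into a new effective $\R$-boundary $\Gamma$ on $X$ such that $(X,\Gamma)$ is potentially klt with $-(K_X+\Gamma)$ still big, and then to invoke the characterization of Fano type varieties from \cite{CJK} and \cite{CP} recalled in the introduction. The two locus hypotheses are designed, respectively, to handle the lc coefficients of $\Delta$ (via condition~(2)) and to control the asymptotic part of its singularities (via condition~(3)).

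For the first step I would use condition~(2) to pick an ample $A$ on $X$ and, for small $\epsilon>0$, an effective $\R$-divisor $D \sim_{\R} \Delta + \epsilon A$ whose support contains no lc center of $(X,\Delta)$; such a $D$ exists because $\bm(\Delta)$ misses every lc center. Setting $\Gamma_t := (1-t)\Delta + tD$ for $0 < t \ll 1$, a discrepancy computation on a log resolution $f\colon Y \to X$ gives, for any divisor $E$ that is an lc center of $(X,\Delta)$, $a(E;X,\Gamma_t) = t\,(a(E;X,0)+1) - 1 > -1$, where the positivity of $a(E;X,0)+1$ is precisely the klt hypothesis on $X$; at all other divisors the discrepancy varies continuously in $t$. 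Hence $(X,\Gamma_t)$ is klt for small $t$, and $-(K_X+\Gamma_t) \equiv -(K_X+\Delta) - t\epsilon A$ remains big.

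The second step is to upgrade ``klt'' to ``potentially klt'' for $(X,\Gamma_t)$ using condition~(3). I must verify $\bar{a}(E;X,\Gamma_t) = a(E;X,\Gamma_t) - \sigma_E(-(K_X+\Gamma_t)) > -1$ for every prime divisor $E$ over $X$. For $E$ that were already strictly pklt in $(X,\Delta)$, the inequality is preserved by continuity. The delicate case is an $E$ with $\bar{a}(E;X,\Delta)=-1$, i.e., a plc center; condition~(3) places its center in $X$ outside $\bm(-(K_X+\Delta))$, forcing $\sigma_E(-(K_X+\Delta))=0$, and hence $a(E;X,\Delta)=-1$, so $E$ is simultaneously an lc center. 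Step~1 has already strictly improved the discrepancy at every such $E$, leaving room for a controlled increase of $\sigma_E(-(K_X+\Gamma_t))$ without violating $\bar{a}>-1$. Once $(X,\Gamma_t)$ is pklt with big anticanonical, the result of \cite{CJK, CP} yields that $X$ is of Fano type.

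The main obstacle is the quantitative control of $\sigma_E$ in Step~2: the shift $-(K_X+\Delta) \mapsto -(K_X+\Delta) - t\epsilon A$ is in the negative ample direction, which can only increase asymptotic vanishing orders. Ensuring uniformly over all relevant $E$ (including those exceptional over $X$) that $\sigma_E(-(K_X+\Gamma_t)) < 1 + a(E;X,\Gamma_t)$ requires a careful estimate using Nakayama's divisorial Zariski decomposition. I expect this to be manageable either by refining the choice of $D$ so that it simultaneously avoids plc centers, or by routing through the good $-(K_X+\Delta)$-minimal-model result stated in the abstract: on such a model $-(K_X+\Delta)$ becomes semi-ample, all $\sigma_E$ vanish, and a clean pklt structure can then be transferred back to $X$.
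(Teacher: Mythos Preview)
Your overarching strategy—perturb $\Delta$ to a boundary $\Gamma$ with $(X,\Gamma)$ pklt and $-(K_X+\Gamma)$ big, then apply the characterization from \cite{CJK,CP}—is exactly the paper's approach. The gap is in your specific choice of $\Gamma$, and it is precisely what creates the ``main obstacle'' you flag. By taking $\Gamma_t=(1-t)\Delta+tD$ with $D\sim_{\R}\Delta+\epsilon A$, you force $-(K_X+\Gamma_t)\sim_{\R}-(K_X+\Delta)-t\epsilon A$: you are subtracting an ample, so $\sigma_E$ can only go up, and your continuity claim in Step~2 does not give a single $t$ that works for all of the infinitely many $E$ over $X$. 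Neither proposed fix closes this: refining $D$ to avoid plc centers does nothing for the divisors $E$ that are not plc places but whose $\sigma_E$ gets inflated, and routing through Theorem~\ref{thrm:main2-good anti min} is unavailable because that result needs the plc centers to avoid $\bp(-(K_X+\Delta))$, which is a strictly stronger hypothesis than~(3).

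The paper sidesteps all of this by taking simply $\Gamma=(1-\epsilon)\Delta$, with no auxiliary $D$ or $A$. Then $-(K_X+(1-\epsilon)\Delta)=-(K_X+\Delta)+\epsilon\Delta$ moves in the \emph{effective} direction, and subadditivity gives
\[
\sigma_E\bigl(-(K_X+(1-\epsilon)\Delta)\bigr)\le \sigma_E\bigl(-(K_X+\Delta)\bigr)+\epsilon\,\sigma_E(\Delta),
\]
hence $\bar a(E;X,(1-\epsilon)\Delta)\ge \bar a(E;X,\Delta)+\epsilon\bigl(\mult_E f^*\Delta-\sigma_E(\Delta)\bigr)$, where the bracket is always $\ge 0$. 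Condition~(2) is used here not to manufacture a $D$ avoiding lc centers, but directly: since $X$ is klt, $\bm(\Delta)=\nnef(\Delta)$, so at every lc place $E$ one has $\sigma_E(\Delta)=0$, while $\mult_E f^*\Delta>0$ because lc centers lie in $\Supp\Delta$. Together with condition~(3) this yields $\bar a(E;X,(1-\epsilon)\Delta)>-1$ for every $E$, uniformly in $\epsilon$; then weakly pklt $\Rightarrow$ pklt (Theorem~\ref{thrm:wpklt=pklt}) finishes. In short, drop the ample perturbation entirely and reinterpret condition~(2) as ``$\sigma_E(\Delta)=0$ at lc places'' rather than as a device for choosing $D$.
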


The three conditions (1), (2), (3) in Theorem \ref{thrm:main} are sharp in the sense that if any one of the conditions is removed, then the result fails. See Section \ref{sect:example} for the examples.


Birkar--Hu \cite[Corollary 1.2]{BH} proves that if $(X,\Delta)$ is an lc pair such that $K_X+\Delta$ is big and no lc centers of $(X,\Delta)$ are contained in $\bp(K_X+\Delta)$, then there exists a good minimal model of $(X,\Delta)$, thereby generalizing the main result of \cite{bchm}. This roughly shows that under such given conditions, the lc pair $(X,\Delta)$ can be treated as a klt pair since the minimal model program on $(X,\Delta)$ modifies the locus inside $\bp(K_X+\Delta)$. Inspired by this result, we also prove the following.

\begin{theorem}\label{thrm:main2-good anti min}
  Let $(X,\Delta)$ be a plc pair such that $-(K_X+\Delta)$ is a $\Q$-Cartier big divisor. If no plc centers of $(X,\Delta)$ are contained in $\bp(-(K_X+\Delta))$, then there exists a good $-(K_X+\Delta)$-minimal model.
\end{theorem}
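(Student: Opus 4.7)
The strategy is to reduce the statement to the Birkar--Hu theorem \cite[Corollary 1.2]{BH} by constructing a boundary $\Gamma$ on $X$ such that $(X,\Gamma)$ is lc, $K_X+\Gamma$ is big and positively $\Q$-proportional to $-(K_X+\Delta)$, and no lc center of $(X,\Gamma)$ lies in $\bp(K_X+\Gamma)$.

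For $m\gg 0$, since no plc center of $(X,\Delta)$ is contained in $\bp(-(K_X+\Delta))$, I would choose a general member $B\in|m(-(K_X+\Delta))|$ with $\Supp(B)$ disjoint from the plc centers of $(X,\Delta)$. For a small rational $\epsilon>0$, set
\[
\Gamma\;:=\;\Delta + \tfrac{1+\epsilon}{m}\,B,
\]
so that $K_X+\Gamma\sim_{\Q}\epsilon\cdot\bigl(-(K_X+\Delta)\bigr)$ is big. Because $\Supp(B)$ avoids every plc center of $(X,\Delta)$, the pair $(X,\Gamma)$ coincides with $(X,\Delta)$ in a neighbourhood of each plc center, so every lc center of $(X,\Gamma)$ whose image meets the plc locus is already an lc center---hence a plc center---of $(X,\Delta)$. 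Off the plc locus, $(X,\Delta)$ is pklt, and by taking $m$ large and $\epsilon$ small a Bertini-type argument should keep $(X,\Gamma)$ lc there without creating new lc centers outside the plc locus.

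From $K_X+\Gamma\sim_{\Q}\epsilon(-(K_X+\Delta))$ one obtains $\bp(K_X+\Gamma)=\bp(-(K_X+\Delta))$, so the hypothesis forces every lc center of $(X,\Gamma)$ to lie outside $\bp(K_X+\Gamma)$. Birkar--Hu's theorem then furnishes a good minimal model $\phi\colon X\dashrightarrow X'$ of $(X,\Gamma)$, with $\phi_{*}(K_X+\Gamma)$ semiample on $X'$. As $K_X+\Gamma$ is a positive $\Q$-multiple of $-(K_X+\Delta)$, each extremal ray contracted by $\phi$ is simultaneously $(K_X+\Gamma)$-negative and $-(K_X+\Delta)$-negative, so $\phi$ is also a $-(K_X+\Delta)$-MMP; the semiampleness of $\phi_{*}(K_X+\Gamma)$ then immediately gives that of $\phi_{*}(-(K_X+\Delta))$, producing the desired good $-(K_X+\Delta)$-minimal model.

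The main obstacle is verifying that $(X,\Gamma)$ is genuinely an lc pair. For a prime divisor $D\subseteq\bm(-(K_X+\Delta))$ that is not a plc center, one still has $\sigma_{D}(-(K_X+\Delta))>0$, and since $\mult_{D}(B)/m$ converges to $\sigma_{D}(-(K_X+\Delta))$ for a general $B$ in a sufficiently large multiple of $|-(K_X+\Delta)|$, the coefficient $\mult_{D}(\Gamma)$ tends to $\mult_{D}(\Delta)+(1+\epsilon)\sigma_{D}(-(K_X+\Delta))$, which exceeds $1$ precisely in the regime $\bar a(D;X,\Delta)\in[-1,0)$. In that case the naive $\Gamma$ is not a boundary, and the construction must be refined---by truncating the offending coefficients at $1$ and compensating with a suitably chosen ample piece, by running a preliminary MMP that contracts such divisors (permissible since they are disjoint from the plc locus), or by first passing to a small $\Q$-factorialization where these divisors behave more tractably---while preserving the proportionality $K_X+\Gamma\sim_{\Q}c\cdot(-(K_X+\Delta))$ for some $c>0$ up to a harmless perturbation. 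Executing this refinement is where the real technical work resides.
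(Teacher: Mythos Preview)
Your overall strategy matches the paper's: produce $D\sim_{\Q}-(K_X+\Delta)$ so that $\Gamma=\Delta+(1+\epsilon)D$ is an lc pair with $K_X+\Gamma\sim_{\Q}\epsilon(-(K_X+\Delta))$ big and with lc centers equal to the plc centers of $(X,\Delta)$, then apply Birkar--Hu. The gap you flag is the genuine one, but your diagnosis and proposed repairs are off target. The obstruction is not that coefficients of prime divisors \emph{on} $X$ might exceed~$1$: there are only finitely many components with $\sigma_E(-(K_X+\Delta))>0$, and none is a plc center under the hypothesis, so shrinking $\epsilon$ and taking $m$ large handles those. The real issue is that one must have $a(E;X,\Gamma)\geq -1$ for \emph{every} prime divisor $E$ over $X$, and a general $B\in|m(-(K_X+\Delta))|$ gives no uniform control on $\mult_E(f^*B)/m-\sigma_E(-(K_X+\Delta))$ across this infinite family. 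No Bertini argument, coefficient truncation, or preliminary MMP addresses this, and your sketch does not explain how any of those devices would preserve the $\Q$-proportionality $K_X+\Gamma\sim_{\Q}c(-(K_X+\Delta))$ on which the rest of the argument depends.

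The paper resolves the gap by invoking a substantial external input, \cite[Theorem~4.4]{CJK} (Proposition~\ref{prop:CP complement} here): there exists $D\sim_\Q-(K_X+\Delta)$ with $\nklt(X,\Delta+D)=\pnklt(X,\Delta)$, so $(X,\Delta+D)$ is automatically lc because $(X,\Delta)$ is plc. Proposition~\ref{prop:lc complement} then refines $D$ so that $\Supp(D)$ avoids every lc center (possible since these lie outside $\bp(-(K_X+\Delta))$) and verifies that $(X,\Delta+(1+\epsilon)D)$ remains lc with unchanged non-klt locus. One further point you elide: Birkar--Hu produce a minimal model in the Birkar--Shokurov sense, which need not be a birational contraction from $X$ and is not presented as a sequence of extremal contractions, so your ray-by-ray argument does not apply directly; the paper passes to a classical minimal model via \cite[Lemma~2.9(i)]{LT} (Remark~\ref{rem:two antiminmodels}) before checking the conditions of Definition~\ref{def:anti min model}, and confirms semiampleness of $-(K_Y+\Delta_Y)$ via \cite{Fuj-bpf} using the log-bigness supplied by \cite[Corollary~5.6]{ELMNP-restricted volume}.
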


Note that the varieties satisfying the conditions in Theorem \ref{thrm:main2-good anti min} are not necessarily of Fano type. We also note that if $\bp(-(K_X+\Delta))$ contains plc centers of $(X,\Delta)$, then there may exist a $-(K_X+\Delta)$-minimal model that is not good. See Example \ref{example:not 2}.

We expect that Theorem \ref{thrm:main2-good anti min} provides us with some hint toward $-K_X$-minimal model program since the $-K_X$-minimal model program works on the varieties satisfying the conditions in Theorem \ref{thrm:main2-good anti min} even though they are not Mori dream spaces. Exploring how such program works or establishing the flowchart for the program will require further deep investigation.

The rest of this paper is organized as follows: in Section \ref{sect:prelim}, we first recall the definitions of potential pairs and asymptotic base loci, and gather various ingredients for the proof of our main results. In Section \ref{sect:proofs}, we give the proofs of Theorems \ref{thrm:main} and \ref{thrm:main2-good anti min}. In Section \ref{sect:example}, we show by examples that the conditions of the main results are sharp.

\section{Preliminaries}\label{sect:prelim}
We work over the field $\C$ of complex numbers.

\subsection{Pairs}
A \textit{pair} $(X,\Delta)$ consists of a normal projective variety $X$ and an effective divisor $\Delta$ such that $K_{X}+\Delta$ is $\R$-Cartier. Let $f\colon Y\to X$ be a proper birational morphism with $Y$ normal projective variety. For some divisor $\Delta_Y$ on $Y$, we can write $K_{Y}+\Delta_{Y}=f^{\ast}(K_{X}+\Delta)$, and let $E$ be a prime divisor on $Y$. We say that such $E$ is a \textit{divisor over} $X$ and define the \textit{discrepancy} $a(E;X,\Delta)$ of $(X,\Delta)$ at $E$ as
$$ a(E;X,\Delta)=-\mult_{E}\Delta_{Y}, $$
where $\mult_{E}\Delta_{Y}$ denotes the coefficient of $E$ in $\Delta_{Y}$. A pair $(X,\Delta)$ is said to be \textit{kawamata log terminal} (abbreviated as \textit{klt}) if $\inf\limits_{E}a(E;X,\Delta)>-1$, where the infimum is taken over all prime divisors $E$ over $X$. If we replace the above strict inequality with inequality, i.e., $\inf\limits_{E}a(E;X,\Delta)\ge -1$, then we say that $(X,\Delta)$ is \textit{log canonical} (abbreviated as \textit{lc}).

\subsection{Potential discrepancy}
Let $Y$ be a smooth projective variety and $D$ a big $\R$-Cartier divisor on $Y$. For a prime divisor $E$ on $Y$, we define the \textit{asymptotic divisorial valuation} of $D$ along $E$ as $\sigma_{E}(D)=\inf\{\mult_{E}D' \mid 0\leq D'\sim_{\R}D\}$. If $D$ is a pseudoeffective $\R$-Cartier divisor, then we define as $\sigma_{E}(D)=\lim\limits_{\varepsilon\to 0^{+}}\sigma_{E}(D+\varepsilon A)$, where $A$ is an ample divisor on $Y$. The limit is independent of the choice of $A$ and it coincides with the definition for big $\R$-Cartier divisors. Moreover, for a given pseudoeffective $\R$-Cartier divisor $D$, there are only finitely many prime divisors $E$ such that $\sigma_{E}(D)>0$. See \cite{Nak} for more details.

Let $X$ be a normal projective variety and $D$ a pseudoeffective $\R$-Cartier divisor on $X$. If $E$ is a prime divisor over $X$, there exists a resolution  $f\colon Y\to X$ where $E$ is a prime divisor on $Y$. We define $\sigma_{E}(D)$ as $\sigma_{E}(D)=\sigma_{E}(f^{\ast}D)$. One can see that $\sigma_{E}(D)$ only depends on the valuation $\ord_{E}$ defined by $E$.

Now, we introduce the notion of potential pairs which was first introduced in \cite{CP}.

\begin{definition}
Let $(X,\Delta)$ be a pair with $-(K_{X}+\Delta)$ pseudoeffective. For a prime divisor $E$ over $X$, we define the \textit{potential discrepancy} $\bar{a}(E;X,\Delta)$ of $E$ with respect to $(X,\Delta)$ as
$$ \bar{a}(E;X,\Delta)=a(E;X,\Delta)-\sigma_{E}(-(K_{X}+\Delta)). $$
If $\bar{a}(E;X,\Delta)>-1$ for all prime divisors $E$ over $X$, then we say that the pair is \textit{weakly potentially kawamata log terminal}.
We say that the pair $(X,\Delta)$ is \textit{potentially kawamata log terminal} (resp. \textit{potentially log canonical}) if
$$ \inf_{E}\bar{a}(E;X,\Delta)> -1\;(\text{resp.} \geq -1), $$
where the infimum is taken over all prime divisors $E$ over $X$.
\end{definition}

If $-(K_X+\Delta)$ is big, then the notions of weakly pklt and pklt coincide.

\begin{theorem}[{cf. \cite[Theorem 1.3]{CJK}}] \label{thrm:wpklt=pklt}
  Let $(X,\Delta)$ be a pair such that $-(K_X+\Delta)$ is a big $\Q$-Cartier divisor. Then the pair $(X,\Delta)$ is weakly pklt if and only if it is pklt.
\end{theorem}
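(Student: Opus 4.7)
The implication $\text{pklt}\Rightarrow\text{weakly pklt}$ is tautological, so I assume $(X,\Delta)$ is weakly pklt with $D:=-(K_X+\Delta)$ big and aim to produce a uniform gap $\bar a(E;X,\Delta)\geq -1+\delta$ for every prime divisor $E$ over $X$. A first observation is that weakly pklt already forces $(X,\Delta)$ to be klt, since $a(E;X,\Delta)=\bar a(E;X,\Delta)+\sigma_E(D)>-1$. Using that $D$ is big, the Nakayama negative part $N:=N_\sigma(f^*D)$ has finite support on any smooth birational model, so I choose a log resolution $f\colon Y\to X$ of $(X,\Delta)$, set $K_Y+\Delta_Y=f^*(K_X+\Delta)$, and refine $Y$ so that the support of $\Delta_Y+N$ is SNC. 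Then for any prime divisor $E$ on $Y$, $\bar a(E;X,\Delta)=-\mult_E\Delta_Y-\sigma_E(f^*D)=-\mult_E(\Delta_Y+N)$, so the weakly pklt hypothesis is exactly the statement that $\max_E\mult_E(\Delta_Y+N)<1$, and I set $\delta:=1-\max_E\mult_E(\Delta_Y+N)>0$.

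The heart of the argument is the comparison inequality
\begin{equation*}
\bar a(E;X,\Delta)\geq a(E;Y,\Delta_Y+N)\quad\text{for every prime divisor $E$ over $X$.}
\end{equation*}
Realizing $E$ as a prime on a further smooth resolution $\pi\colon Y'\to Y$, the crepant pullback identity gives $a(E;Y,\Delta_Y+N)=a(E;X,\Delta)-\mult_E(\pi^*N)$, so the inequality reduces to $\sigma_E(\pi^*f^*D)\leq \mult_E(\pi^*N)$. Writing the Nakayama decomposition $f^*D=P+N$ and invoking subadditivity of $\sigma_E$ on $\pi^*P+\pi^*N$, the claim further reduces to $\sigma_E(\pi^*P)=0$ for every prime $E$ on $Y'$. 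This is the main obstacle: although $P$ is movable on $Y$ by construction, its pullback $\pi^*P$ could a priori carry positive asymptotic multiplicity along a $\pi$-exceptional divisor sitting over a higher-codimension subvariety of $Y$. Bigness of $D$ is crucial here, producing enough effective representatives of $P$ so that, via an approximation argument (or a direct appeal to Nakayama's pullback formula for the $\sigma$-decomposition in \cite{Nak}), the center of any $\pi$-exceptional $E$ on $Y$ can be effectively avoided and $\sigma_E(\pi^*P)=0$ is achieved.

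Once the comparison inequality is in hand, the proof concludes by an SNC computation on $(Y,\Delta_Y+N)$. A direct induction on smooth blowups along strata shows that the coefficient of every exceptional divisor in further blowups is also at most $1-\delta$: for a blowup with smooth center of codimension $c$ contained in components carrying coefficients $b_1,\dots,b_k\leq 1-\delta$ (so $c\geq k\geq 1$), the new exceptional appears with coefficient $\sum_{i=1}^{k}b_i-(c-1)\leq k(1-\delta)-(k-1)=1-k\delta\leq 1-\delta$, while blowups of centers in general position produce coefficients that are only smaller. Consequently $a(E;Y,\Delta_Y+N)\geq -(1-\delta)=-1+\delta$ uniformly in $E$, and combined with the comparison inequality this gives $\bar a(E;X,\Delta)\geq -1+\delta$, establishing pklt.
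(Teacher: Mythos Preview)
Your plan has a genuine gap precisely at the step you yourself flag as the main obstacle. The claim $\sigma_E(\pi^*P)=0$ for every prime $E$ on $Y'$, where $P=P_\sigma(f^*D)$, is false in general, and neither bigness of $D$ nor any pullback formula in \cite{Nak} supplies it. Movability of $P$ on $Y$ only says $\sigma_F(P)=0$ for prime divisors $F$ \emph{on} $Y$; it imposes nothing on $\sigma_E(P)$ for $E$ exceptional over $Y$. Whenever $P$ is movable but not nef (which certainly occurs in dimension $\geq 3$), one has $\bm(P)\neq\emptyset$, and for any $E$ over $Y$ with center in $\bm(P)$ we get $\sigma_E(\pi^*P)>0$. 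Your comparison inequality $\bar a(E;X,\Delta)\geq a(E;Y,\Delta_Y+N)$ then fails outright: if $f^*D$ is already movable but not nef, then $N=N_\sigma(f^*D)=0$, the right-hand side equals $a(E;X,\Delta)$, and the left-hand side $a(E;X,\Delta)-\sigma_E(D)$ is strictly smaller for suitable $E$.

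The natural repair is to replace $N_\sigma(f^*D)$ by $\tfrac{1}{m}F_m$, where $F_m$ is the fixed part of $|mf^*D|$ on a log resolution of both $(X,\Delta)$ and the base ideal of $|mD|$. The mobile part is then \emph{free}, hence nef, so its pullback to any higher model genuinely has all $\sigma_E=0$, and the comparison $\bar a(E;X,\Delta)\geq a\bigl(E;Y,\Delta_Y+\tfrac{1}{m}F_m\bigr)$ holds for every $E$. What remains is to show that the weakly pklt hypothesis forces the SNC boundary $\Delta_Y+\tfrac{1}{m}F_m$ to have all coefficients $<1$ for some large $m$; this is the substantive point, handled via asymptotic multiplier ideals (or an equivalent stabilization argument), and is exactly the content of \cite[Theorem~1.3]{CJK}. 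The paper itself does not reprove this but simply invokes that reference. Your SNC blowup induction in the final paragraph is correct once a valid comparison inequality is in hand.
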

\begin{proof}
  This is immediate if we let $D=-(K_X+\Delta)$ in \cite[Theorem 1.3]{CJK}.
\end{proof}

Let $E$ be a prime divisor over $X$, that is, there is a proper birational morphism $f\colon Y\rightarrow X$ with $Y$ normal projective variety and $E$ is a prime divisor on $Y$. We denote by $C_{X}(E)$, the \textit{center} $f(E)$ of $E$ on $X$.

\begin{definition}\label{def:pnklt locus}
Let $(X,\Delta)$ be a pair with $-(K_{X}+\Delta)$ pseudoeffective. We define the \textit{potentially non-klt locus} $\pnklt(X,\Delta)$ of $(X,\Delta)$ as
$$ \pnklt(X,\Delta)=\bigcup C_{X}(E),$$
where the union is taken over all prime divisors $E$ over $X$ such that $\bar{a}(E;X,\Delta)\le -1$.

The \textit{non-klt locus} $\nklt(X,\Delta)$ is defined as the union $\bigcup C_X(E)$ for all prime divisors $E$ over $X$ such that $a(E;X,\Delta)\le -1$.
\end{definition}

In general, we do not know whether $\pnklt(X,\Delta)$ is closed or not. However, if $-(K_{X}+\Delta)$ is big, then $\pnklt(X,\Delta)$ is closed and it coincides with the non-klt locus of some pair.

\begin{proposition}[{\cite[Theorem 4.4]{CJK}}]\label{prop:CP complement}
Let $(X,\Delta)$ be a pair such that $-(K_{X}+\Delta)$ is a big $\Q$-divisor. Then there is an effective divisor $D$ such that $D\sim_{\Q}-(K_{X}+\Delta)$ and
$$ \pnklt(X,\Delta)=\nklt(X,\Delta+D). $$
\end{proposition}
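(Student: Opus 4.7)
The plan is to observe that the inclusion $\pnklt(X,\Delta) \subseteq \nklt(X,\Delta+D)$ holds automatically for any effective $\Q$-divisor $D \sim_\Q -(K_X+\Delta)$, and then to construct $D$ finely enough that the reverse inclusion also holds. For the automatic half, fix a prime divisor $E$ over $X$ with $\bar{a}(E;X,\Delta)\le -1$ and a birational morphism $f\colon Y\to X$ on which $E$ appears. The defining infimum property of $\sigma_E$ gives $\mult_E(f^*D)\ge \sigma_E(-(K_X+\Delta))$ for every effective $D\sim_\Q -(K_X+\Delta)$, whence
\[
a(E;X,\Delta+D) = a(E;X,\Delta) - \mult_E(f^*D) \le \bar{a}(E;X,\Delta)\le -1,
\]
so $C_X(E)\subseteq \nklt(X,\Delta+D)$. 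The substantive task is to choose $D$ so that no prime divisor $E$ over $X$ with $\bar{a}(E;X,\Delta)>-1$ acquires discrepancy $\le -1$ in $(X,\Delta+D)$.

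To construct such a $D$, I would first fix a log resolution $f\colon Y\to X$ of $(X,\Delta)$ on which all prime divisors $F$ on $Y$ with $\sigma_F(-(K_X+\Delta))>0$ appear; there are only finitely many such $F$ by the Nakayama finiteness recalled in the preliminaries. The Nakayama decomposition then reads $f^*(-(K_X+\Delta)) = P + N$ with $N=\sum_i \sigma_{E_i}(-(K_X+\Delta))\,E_i$, where $P$ is big and $\sigma_F(P)=0$ for every prime divisor $F$ on $Y$. The key input is a standard approximation: for every $\epsilon>0$ there exists an effective $\Q$-divisor $D\sim_\Q -(K_X+\Delta)$ on $X$ with
\[
\mult_F(f^*D)\le \sigma_F(-(K_X+\Delta))+\epsilon
\]
for every prime divisor $F$ in the finite set consisting of the components of $\Delta_Y$, the $f$-exceptional divisors, and the $E_i$. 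This is achieved by taking a general $\Q$-linear combination of sections of $|{-m(K_X+\Delta)}|$ for $m$ sufficiently divisible, using that $\tfrac{1}{m}\mult_F \Fix|{-m(K_X+\Delta)}|\to \sigma_F(-(K_X+\Delta))$.

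Now choose $\epsilon$ strictly smaller than the finitely many positive gaps $a(F;X,\Delta)+1-\sigma_F(-(K_X+\Delta))$ as $F$ ranges over prime divisors on $Y$ with $\bar{a}(F;X,\Delta)>-1$. Passing if necessary to a further log resolution of $(X,\Delta+D)$ dominating $Y$, one checks: every prime divisor $F$ with $\bar{a}(F;X,\Delta)\le -1$ contributes to $\nklt(X,\Delta+D)$ by the automatic inclusion; every $F$ with $\bar{a}(F;X,\Delta)>-1$ satisfies $a(F;X,\Delta+D)>-1$ by the choice of $\epsilon$; and any newly introduced exceptional divisor over $Y$ sits over a codimension-$\ge 2$ intersection of components of $f^*D$ whose coefficients are all less than $1$, hence has discrepancy $>-1$. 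Combined, this yields $\nklt(X,\Delta+D)=\pnklt(X,\Delta)$.

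The main obstacle I anticipate is the simultaneous approximation step: pointwise, $\sigma_F(-(K_X+\Delta))$ is realized only as an infimum, and a naive average of single-$F$ approximants fails to uniformly control multiplicities along several prime divisors at once. The remedy, as sketched above, is to pass to a sufficiently divisible $m$ and exploit that $\tfrac{1}{m}\mult_F\Fix|{-m(K_X+\Delta)}|\to \sigma_F(-(K_X+\Delta))$ uniformly over the finite family, then produce $D$ as a suitable general $\Q$-combination of elements of $|{-m(K_X+\Delta)}|$ realizing each bound. A secondary, more routine point is to make sure that the extra blow-ups needed to log-resolve $(X,\Delta+D)$ do not enlarge $\nklt(X,\Delta+D)$ beyond $\pnklt(X,\Delta)$; this is guaranteed by the smallness of the non-$E_i$ coefficients of $f^*D$.
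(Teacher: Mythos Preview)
The paper does not give its own proof of this statement; it is quoted from \cite[Theorem~4.4]{CJK}, whose argument is organised around the adjoint asymptotic multiplier ideal $\mathcal{J}\bigl((X,\Delta);\lVert -(K_X+\Delta)\rVert\bigr)$. One identifies its cosupport with $\pnklt(X,\Delta)$ and then uses Noetherian stabilisation of asymptotic multiplier ideals together with the Koll\'ar--Bertini principle to produce, for $m$ sufficiently divisible, a general $D\in\tfrac{1}{m}\lvert{-m(K_X+\Delta)}\rvert$ with $\mathcal{J}(X,\Delta+D)=\mathcal{J}\bigl((X,\Delta);\lVert -(K_X+\Delta)\rVert\bigr)$, hence $\nklt(X,\Delta+D)=\pnklt(X,\Delta)$.

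Your easy inclusion and the overall shape of the hard one are fine, but the step you call ``secondary, more routine'' is the crux, and your justification for it does not hold. Having chosen $Y$ to resolve only $(X,\Delta)$, you have no reason for $\Delta_Y+f^*D$ to have SNC support: the mobile part of $f^*\lvert{-m(K_X+\Delta)}\rvert$ can still have base points on $Y$, along which a general member is highly singular. An exceptional divisor $G$ on a further blow-up centred at such a point can then satisfy $\mult_G g^*f^*D\gg \sigma_G(-(K_X+\Delta))$, forcing $a(G;X,\Delta+D)\le -1$ while $\bar a(G;X,\Delta)>-1$; smallness of the coefficients of $f^*D$ along prime divisors \emph{on} $Y$ says nothing about multiplicities along valuations centred \emph{over} $Y$. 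If instead you enlarge $Y$ to also resolve $\mathfrak b(\lvert{-m(K_X+\Delta)}\rvert)$ so that Bertini applies, then $Y$ depends on $m$, and the finite list of positive gaps $\bar a(F;X,\Delta)+1$ you use to choose $\epsilon$ is no longer fixed in advance; worse, for the new exceptional $F$ one only knows $\tfrac{1}{m}\ord_F\mathfrak b_m\ge \sigma_F$, so $a(F;X,\Delta+D)\le -1$ need not imply $\bar a(F;X,\Delta)\le -1$. What actually closes this circle is the stabilisation of the multiplier ideals $\mathcal{J}\bigl((X,\Delta);\tfrac{1}{m}\mathfrak b_m\bigr)$ for $m$ sufficiently divisible, which is exactly the input \cite{CJK} uses from the outset; once that is in hand, the separate $\epsilon$-approximation step becomes unnecessary.
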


\bigskip

If $(X,\Delta+D)$ is klt (resp. lc), then the effective divisor $D$ in Proposition \ref{prop:CP complement} is called a \textit{klt complement} (resp. \textit{lc complement}) of $(X,\Delta)$.

The following characterization of Fano type varieties is proved using Proposition \ref{prop:CP complement}.

\begin{theorem}[{cf. \cite[Corollary 4.10]{CJK}, \cite[Theorem 5.1]{CP}}]\label{thrm:pklt+big=FT}
  Let $X$ be a $\Q$-factorial normal projective variety. Then the following are equivalent:
  \begin{enumerate}[(1)]
    \item $X$ is a Fano type variety.
    \item There exists an effective $\Q$-divisor $\Delta$ such that $(X,\Delta)$ is pklt and $-(K_X+\Delta)$ is big.
  \end{enumerate}
\end{theorem}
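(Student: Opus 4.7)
My plan is to prove the equivalence by treating the two directions separately. Direction $(1) \Rightarrow (2)$ is essentially immediate: if $X$ is of Fano type with klt log Fano structure $(X,\Delta_0)$, then $-(K_X+\Delta_0)$ is ample, in particular big, and ampleness forces $\sigma_E(-(K_X+\Delta_0)) = 0$ for every prime divisor $E$ over $X$, so $\bar a(E;X,\Delta_0) = a(E;X,\Delta_0) > -1$ and the pklt condition reduces to klt. So I would devote the bulk of the work to $(2) \Rightarrow (1)$.

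For $(2) \Rightarrow (1)$ the strategy is first to upgrade the pklt hypothesis to a genuine klt pair by producing a complement, and then perturb to obtain an actual log Fano boundary. Applying Proposition \ref{prop:CP complement} to $(X,\Delta)$ yields an effective $\Q$-divisor $D \sim_{\Q} -(K_X+\Delta)$ with
\[
\nklt(X,\Delta+D) \;=\; \pnklt(X,\Delta).
\]
The pklt assumption means $\pnklt(X,\Delta) = \emptyset$, so $(X,\Delta+D)$ is klt, and by the choice of $D$ we have $K_X+\Delta+D \sim_{\Q} 0$. In other words $D$ is a klt complement of $(X,\Delta)$.

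To produce a klt log Fano structure from this klt Calabi--Yau type pair, I invoke Kodaira's lemma to write $-(K_X+\Delta) \sim_{\Q} A + F$ with $A$ ample and $F$ effective. For a sufficiently small rational $\varepsilon > 0$ I then set
\[
\Delta_{\varepsilon} \;:=\; \Delta + (1-\varepsilon) D + \varepsilon F \;\geq\; 0.
\]
A direct computation, using $-(K_X+\Delta) \sim_{\Q} D$ and $-(K_X+\Delta) \sim_{\Q} A + F$, gives
\[
-(K_X+\Delta_{\varepsilon}) \;\sim_{\Q}\; \varepsilon(D - F) \;\sim_{\Q}\; \varepsilon A,
\]
which is ample. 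Since klt is an open condition on the boundary and $(X, \Delta + D)$ is klt, the pair $(X, \Delta_{\varepsilon})$ is klt for $\varepsilon$ small enough. Thus $(X, \Delta_{\varepsilon})$ is a klt log Fano pair, certifying that $X$ is of Fano type.

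The main obstacle in this outline is Proposition \ref{prop:CP complement} itself: producing an effective divisor $D \sim_{\Q} -(K_X+\Delta)$ whose non-klt locus coincides exactly with $\pnklt(X,\Delta)$ is the deep step and encapsulates the complement-theoretic machinery behind potential pairs. Once that proposition is granted, the remainder is a short application of Kodaira's lemma and openness of klt, and no further birational model-theoretic input is needed.
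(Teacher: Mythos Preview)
Your proof is correct. The paper does not actually supply a proof of this theorem in the text; it records the result with citations to \cite{CJK} and \cite{CP} and remarks only that it ``is proved using Proposition~\ref{prop:CP complement}.'' Your argument follows precisely this indicated route: you invoke Proposition~\ref{prop:CP complement} to produce a klt complement $D$, then perturb via Kodaira's lemma to obtain a klt log Fano boundary. So your approach matches what the paper signals, and the details you supply (the computation $-(K_X+\Delta_\varepsilon)\sim_{\Q}\varepsilon A$ and the openness of klt) are accurate.
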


\subsection{Asymptotic base loci}
Let $X$ be a normal projective variety and $D$ an $\R$-Cartier divisor on $X$. The \textit{stable base locus} of $D$ is defined as
$$ \B(D)=\bigcap\{\Supp(\Delta) \mid 0\le \Delta\sim_{\R}D\}. $$
The \textit{augmented base locus} of $D$ is defined as
$$ \bp(D)=\bigcap_{A}\B(D-A),$$
where the intersection is taken over all ample $\R$-divisors $A$.
The \textit{restricted base locus} of $D$ is defined as
$$ \bm(D)=\bigcup_{A}\B(D+A), $$
where the union is taken over all ample $\R$-divisors $A$. The restricted base locus $\bm(D)$ is in general not Zariski closed (\cite{Les}), whereas augmented base loci $\bp(D)$ are always Zariski closed. Note that $\bp(D)=\B(D-A)$ for any sufficiently small ample divisor $A$.

The \textit{non-nef locus} of $D$ is defined as
$$ \nnef(D)=\bigcup C_{X}(E),$$
where the union is taken over all the prime divisors $E$ over $X$ such that $\sigma_{E}(D)>0$.

In general, we have
$$ \nnef(D)\subseteq \bm(D)\subseteq \B(D)\subseteq \bp(D). $$

We can check that $D$ is nef if and only if $\nnef(D)=\bm(D)=\emptyset$. If $X$ is a smooth projective variety and $D$ a pseudoeffective $\R$-Cartier divisor on $X$, then we have $\nnef(D)=\bm(D)$ (cf. \cite{Nak}, \cite{ELMNP-base loci}). One can ask whether this equality also holds on a singular variety or not. 

\begin{proposition}[{\cite[Theorem 1.2]{CD}}]\label{prop:klt base}
Let $(X,\Delta)$ be a klt pair and $D$ a pseudoeffective $\R$-Cartier divisor on $X$. Then we have
$$ \nnef(D)=\bm(D). $$
\end{proposition}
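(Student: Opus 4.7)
The containment $\nnef(D)\subseteq\bm(D)$ holds in general: for any prime divisor $E$ over $X$ with $\sigma_{E}(D)>0$, the equality $\sigma_E(D)=\lim_{A\to 0}\sigma_E(D+A)$ over ample $\mathbb{R}$-divisors $A$ forces $\sigma_E(D+A)>0$ for all sufficiently small ample $A$, whence $C_X(E)\subseteq \B(D+A)\subseteq \bm(D)$. So the plan focuses on the reverse inclusion $\bm(D)\subseteq\nnef(D)$.

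First I reduce to the case where $D$ is big. Fix an ample divisor $A_{0}$ on $X$ and set $D_\epsilon:=D+\epsilon A_{0}$, which is big for each $\epsilon>0$. Subadditivity of $\sigma_E$ gives $\sigma_E(D_\epsilon)\leq \sigma_E(D)$, while $\sigma_E(D)=\lim_{\epsilon\to 0^{+}}\sigma_E(D_\epsilon)$, so $\nnef(D)=\bigcup_{\epsilon>0}\nnef(D_\epsilon)$. Similarly, any ample $A$ satisfies $A-\epsilon A_{0}$ ample for $\epsilon$ small enough, so $\B(D+A)\subseteq \bm(D_\epsilon)$; combined with the obvious $\bm(D_\epsilon)\subseteq \bm(D)$, this yields $\bm(D)=\bigcup_{\epsilon>0}\bm(D_\epsilon)$. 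Taking unions over $\epsilon>0$, it therefore suffices to establish $\bm(D_\epsilon)\subseteq \nnef(D_\epsilon)$ for each $\epsilon>0$, so we may assume $D$ is big.

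Under this assumption, only finitely many prime divisors $E$ over $X$ satisfy $\sigma_{E}(D)>0$, and I choose a log resolution $f\colon Y\to X$ of $(X,\Delta)$ extracting all of them. On the smooth variety $Y$ the smooth case of Nakayama--ELMNP yields $\nnef(f^{\ast}D)=\bm(f^{\ast}D)$, and since $\sigma_E(f^{\ast}D)=\sigma_E(D)$ for each prime $E$ on $Y$, the choice of $f$ gives $\nnef(D)=f(\nnef(f^{\ast}D))$. It remains to prove $\bm(D)\subseteq f(\bm(f^{\ast}D))$. Write
$$K_{Y}+f^{-1}_{\ast}\Delta=f^{\ast}(K_{X}+\Delta)+F;$$
the klt hypothesis on $(X,\Delta)$ forces every coefficient of $F$ (on exceptional components) to exceed $-1$, so $\lceil F\rceil$ is effective and $f$-exceptional, giving $f_{\ast}\mathcal{O}_Y(\lceil F\rceil)=\mathcal{O}_X$. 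For a point $x\in X\setminus f(\bm(f^{\ast}D))$, every point $y\in f^{-1}(x)$ admits an ample $\mathbb{R}$-divisor $A_Y$ on $Y$ with $y\notin \B(f^{\ast}D+A_Y)$. Applying Nadel-type vanishing to the klt pair $(Y,f^{-1}_{\ast}\Delta)$ and Kodaira's lemma to relate $A_Y$ with $f^{\ast}A$ for a suitable ample $A$ on $X$, one produces an effective divisor $D'\sim_{\mathbb{R}} D+A$ on $X$ with $x\notin \Supp D'$, hence $x\notin \bm(D)$.

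The principal obstacle is the descent step $\bm(D)\subseteq f(\bm(f^{\ast}D))$, because base-locus data transfers covariantly under pullback but not under pushforward. The klt assumption is exactly what activates the relevant Kodaira-- and Grauert--Riemenschneider-type vanishing theorems on $Y$ relative to $f$, allowing sections constructed on $Y$ to descend to $X$ without acquiring a base point at $x$. Without klt, exceptional components with discrepancy $\leq -1$ would give $f_{\ast}\mathcal{O}_{Y}(\lceil F\rceil)\neq \mathcal{O}_{X}$ and obstruct the descent, and indeed the conclusion $\nnef(D)=\bm(D)$ is known to fail on general singular varieties, so the klt hypothesis must enter at this exact point.
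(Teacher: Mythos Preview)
The paper does not supply a proof of this proposition; it is quoted directly from \cite{CD} as a black box. Your overall plan---reduce to the big case, pass to a log resolution, invoke the smooth case, and descend---is indeed the skeleton of the argument in \cite{CD}, but two points in your write-up are not in order.

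First, the assertion ``only finitely many prime divisors $E$ over $X$ satisfy $\sigma_{E}(D)>0$'' is false. Nakayama's finiteness says that on a fixed smooth model $Y$ there are finitely many prime divisors \emph{on} $Y$ with $\sigma_{E}(f^{\ast}D)>0$; over $X$ there are in general infinitely many such valuations once $\bm(D)\neq\emptyset$ (blow up further inside $\bm(f^{\ast}D)$). Fortunately the identity $\nnef(D)=f(\nnef(f^{\ast}D))$ that you want holds for \emph{any} proper birational $f$, with no special extraction needed: divisors over $Y$ are the same as divisors over $X$, one has $\sigma_{E}(f^{\ast}D)=\sigma_{E}(D)$ by definition, and $f(C_{Y}(E))=C_{X}(E)$. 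So the error is in the justification, not the conclusion.

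Second, and this is the real gap, the descent step $\bm(D)\subseteq f(\bm(f^{\ast}D))$ is not actually argued. The sentence ``applying Nadel-type vanishing \dots\ and Kodaira's lemma \dots\ one produces an effective divisor $D'\sim_{\mathbb{R}}D+A$ with $x\notin\Supp D'$'' does not explain how a section of $f^{\ast}D+A_{Y}$ on $Y$ (with $A_{Y}$ ample on $Y$, a priori depending on the point $y\in f^{-1}(x)$) yields a section of $D+A$ on $X$ with $A$ ample on $X$, nor how Nadel vanishing---a statement about line bundles---is being applied to an $\mathbb{R}$-divisor. In \cite{CD} this is handled through asymptotic multiplier ideals $\mathcal{J}\bigl((X,\Delta);\lVert D\rVert\bigr)$: for klt $(X,\Delta)$ and big $D$ one identifies the co-support of this ideal with $\bm(D)$ via Nadel vanishing applied to carefully chosen integral twists, and then compares with the valuative description to recover $\nnef(D)$. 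Your closing paragraph correctly locates where the klt hypothesis must enter, but the mechanism has to be made explicit rather than alluded to.
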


Let $(X,\Delta)$ be a pair with $-(K_{X}+\Delta)$ pseudoeffective and $E$ a prime divisor over $X$. The center $C_{X}(E)$ of a prime divisor $E$ over $X$ such that $\bar{a}(E;X,\Delta)=-1$ is called a \textit{plc center} of $(X,\Delta)$. If $a(E;X,\Delta)=-1$, then the center $C_{X}(E)$ of $E$ on $X$ is called an \textit{lc center} of $(X,\Delta)$.

Suppose that $\bar{a}(E;X,\Delta)\le -1$. If $\sigma_{E}(-(K_{X}+\Delta))=0$, then we have $a(E;X,\Delta)=\bar{a}(E;X,\Delta)\le -1$. Hence, we can show that 
$$
\nklt(X,\Delta)\subseteq \pnklt(X,\Delta)\subseteq \nklt(X,\Delta)\cup \nnef(-(K_{X}+\Delta)),
$$
where the inequalities are strict in general (\cite[Lemma 4.1]{CP}). In particular, if $X$ is a klt variety, then Proposition \ref{prop:klt base} implies 
\begin{equation}\tag{$\#$}\label{nklt}
\nklt(X,\Delta)\subseteq \pnklt(X,\Delta)\subseteq \nklt(X,\Delta)\cup \bm(-(K_{X}+\Delta)).
\end{equation}

Using these inclusions, we prove the following proposition which plays a crucial role in the proof of Theorem \ref{thrm:main2-good anti min}.

\begin{proposition}\label{prop:lc complement}
  Let $(X,\Delta)$ be a plc pair such that $-(K_X+\Delta)$ is a big $\Q$-divisor. If no plc centers of $(X,\Delta)$ are contained in $\bp(-(K_X+\Delta))$, then there exists an lc complement $D$ of $(X,\Delta)$ such that $(X,\Delta+(1+\epsilon)D)$ is also an lc pair and $\nklt(X,\Delta+D)=\nklt(X,\Delta+(1+\epsilon)D)$ for a sufficiently small rational number $\epsilon>0$.
\end{proposition}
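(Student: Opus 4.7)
The plan is first to use the hypothesis on $\bp(-(K_{X}+\Delta))$ to collapse the plc structure of $(X,\Delta)$ onto its lc structure, then to construct an lc complement $D$ whose support avoids every lc center, and finally to verify the $(1+\epsilon)$-stability by a routine discrepancy computation on a log resolution. For the first step, let $E$ be a plc place of $(X,\Delta)$, so $\bar a(E;X,\Delta)=-1$; if $\sigma_{E}(-(K_{X}+\Delta))>0$, then
\[
C_{X}(E)\subseteq \nnef(-(K_{X}+\Delta))\subseteq \bm(-(K_{X}+\Delta))\subseteq \bp(-(K_{X}+\Delta)),
\]
contradicting the hypothesis. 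Hence $\sigma_{E}(-(K_{X}+\Delta))=0$ and $a(E;X,\Delta)=-1$, so $E$ is an lc place; combined with the obvious converse, plc places coincide with lc places, and consequently the plc centers coincide with the lc centers of $(X,\Delta)$.

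For the second step I would construct an lc complement $D\sim_{\Q}-(K_{X}+\Delta)$ with $\Supp D$ disjoint from every lc center of $(X,\Delta)$. By Kodaira's lemma, write $-(K_{X}+\Delta)\sim_{\Q}A+F_{0}$ with $A$ a small ample $\Q$-divisor and $F_{0}$ effective satisfying $\Supp F_{0}\subseteq \B(-(K_{X}+\Delta)-A)=\bp(-(K_{X}+\Delta))$; in particular $\Supp F_{0}$ contains no lc center. For a sufficiently divisible $m$, the fixed divisorial part of $|m(-(K_{X}+\Delta)-A)|$ is still supported on $\bp(-(K_{X}+\Delta))$, while its mobile part has no divisorial base component. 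Taking general members $H\in|mA|$ and $F\in|m(-(K_{X}+\Delta)-A)|$, both avoiding the finitely many lc centers of $(X,\Delta)$, the divisor $D:=\tfrac{1}{m}(H+F)$ is effective, $\Q$-linearly equivalent to $-(K_{X}+\Delta)$, and has $\Supp D$ disjoint from every lc center. The plc inequality $\mult_{E}\Delta_{Y}+\sigma_{E}(-(K_{X}+\Delta))\le 1$ on a log resolution (which, by Step 1, is strict whenever $\sigma_{E}>0$), together with the estimate $\mult_{E}\pi^{*}D=\sigma_{E}(-(K_{X}+\Delta))+O(1/m)$ coming from the general mobile pieces, ensures that $(X,\Delta+D)$ is lc for $m\gg 0$; since $\Supp D$ avoids every lc center, we also conclude $\nklt(X,\Delta+D)=\pnklt(X,\Delta)$, realizing $D$ as an lc complement of $(X,\Delta)$ in the sense of Proposition \ref{prop:CP complement}.

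The hard part is the construction of $D$ just described: simultaneously arranging that $(X,\Delta+D)$ is lc and that $\Supp D$ avoids every lc center. The hypothesis on $\bp$ is essential here, as it forces the divisorial fixed part of any effective representative of $-(K_{X}+\Delta)$ to lie inside $\bp(-(K_{X}+\Delta))$ and hence away from the lc centers, while the movable part can be chosen generically to also avoid them. The remaining verification is routine: fix a log resolution $\pi\colon Y\to X$ of $(X,\Delta+D)$, and observe that for each prime divisor $E$ on $Y$ the coefficient of $E$ in the log pullback of $(X,\Delta+(1+\epsilon)D)$ is
\[
b_{E}(\epsilon)=\mult_{E}\Delta_{Y}+(1+\epsilon)\mult_{E}\pi^{*}D.
\]
For every lc place $E$ of $(X,\Delta)$, the center $C_{X}(E)$ is an lc center, so $\mult_{E}\pi^{*}D=0$ while $\mult_{E}\Delta_{Y}=1$, giving $b_{E}(\epsilon)=1$ for every $\epsilon$. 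For the finitely many remaining prime divisors $E$ on $Y$ with $\mult_{E}\pi^{*}D>0$, the lc-ness of $(X,\Delta+D)$ gives $b_{E}(0)<1$, so $b_{E}(\epsilon)\le 1$ for every $\epsilon$ below the minimum of the thresholds $(1-b_{E}(0))/\mult_{E}\pi^{*}D$. Taking any rational $\epsilon$ strictly below this positive minimum, the pair $(X,\Delta+(1+\epsilon)D)$ is lc and its coefficient-$1$ places coincide with those of $(X,\Delta+D)$, yielding $\nklt(X,\Delta+(1+\epsilon)D)=\nklt(X,\Delta+D)$.
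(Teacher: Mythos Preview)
Your argument follows the paper's three-step structure almost exactly: first collapse plc to lc via the hypothesis on $\bp$, then produce an lc complement $D$ whose support misses the lc centers, then verify $(1+\epsilon)$-stability by a discrepancy count on a log resolution. Steps~1 and~3 match the paper essentially verbatim, modulo the slip ``$\Supp D$ disjoint from every lc center'', which should read ``$\Supp D$ contains no lc center''---your construction only gives the latter (a general ample divisor will still meet a positive-dimensional lc center), and your Step~3 only uses the latter.

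The one substantive difference is Step~2. The paper invokes Proposition~\ref{prop:CP complement} (i.e., \cite[Theorem~4.4]{CJK}) as a black box to obtain $D$ with $\nklt(X,\Delta+D)=\pnklt(X,\Delta)$, and then argues separately, via $\bp(-(K_X+\Delta))=\B(-(K_X+\Delta)-A)$, that $\Supp D$ can also be taken to miss the lc centers. You instead try to build such a $D$ directly from Kodaira's lemma and general linear-system members. The crux of your construction is the estimate $\mult_E\pi^*D=\sigma_E(-(K_X+\Delta))+O(1/m)$, and this is where the sketch is incomplete: the estimate is only meaningful on a \emph{fixed} log resolution $\pi$, but certifying that $(X,\Delta+D)$ is lc requires a log resolution of $(X,\Delta+D)$, which depends on $D$ and hence on $m$. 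Arranging a single $\pi$ on which both the SNC condition for $\Delta+D$ and the uniform $O(1/m)$ bound hold simultaneously is exactly the technical content of the cited result, so your Step~2 is really a sketch of its proof rather than an independent route around it.
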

\begin{proof}
Since no plc centers of $(X,\Delta)$ are contained in $\bp(-(K_X+\Delta))$, we have $\nklt(X,\Delta)=\pnklt(X,\Delta)$. By Proposition \ref{prop:CP complement}, there is an effective divisor $D$ such that $D\sim_{\Q}-(K_{X}+\Delta)$ and
$$ \pnklt(X,\Delta)=\nklt(X,\Delta+D). $$
Furthermore, we can assume that $\Supp(D)$ does not contain any lc center of $(X,\Delta)$.
By \cite{ELMNP-base loci} and \cite{bchm}, we have for some ample $\Q$-divisor $A$
$$
\bp(-(K_{X}+\Delta))=\B(-(K_{X}+\Delta)-A).
$$
Let $Z_{1},\dots, Z_{r}$ be the lc centers of $(X,\Delta)$. By assumption, $Z_{i}\not\subseteq\bp(-(K_{X}+\Delta))$ for each $i$.
Therefore, there exists an effective divisor $D'$ such that $D'\sim_{\Q}-(K_{X}+\Delta)-A$ and $Z_{i}\not\subseteq\Supp (D')$ for all $i$. Since $A$ is ample, we can also find the required effective divisor $D$.

Let $f\colon Y\to X$ be a log resolution of $(X,\Delta+D)$ and $E$ a prime divisor on $Y$. Suppose that $C_{X}(E)\subseteq \nklt(X,\Delta)$, i.e., $C_X(E)$ is an lc center of $(X,\Delta)$. Since $\nklt(X,\Delta)=\nklt(X,\Delta+D)$, we have $a(E;X,\Delta+(1+\epsilon)D)=a(E;X,\Delta)\ge-1$ for any $\epsilon>0$. If $C_{X}(E)\not\subseteq \nklt(X,\Delta)=\nklt(X,\Delta+D)$, then $a(E;X,\Delta+D)>-1$ and $a(E;X,\Delta+(1+\epsilon)D)>-1$ for a sufficiently small $\epsilon>0$. Hence, $(X,\Delta+(1+\epsilon)D)$ is lc for a sufficiently small $\epsilon>0$. In particular, we also have $\nklt(X,\Delta+D)=\nklt(X,\Delta+(1+\epsilon)D)$ for a sufficiently small $\epsilon>0$.
\end{proof}

Now we recall the following result on the existence of minimal models of lc pairs.

\begin{theorem}[{\cite[Corollary 1.2]{BH}}]\label{thrm:BH}
Let $(X,\Delta)$ be an lc pair such that $K_X+\Delta$ is a big $\Q$-divisor and assume that no lc centers of $(X,\Delta)$ are contained in $\bp(K_X+\Delta)$. Then there exists a good minimal model of $(X,\Delta)$.
\end{theorem}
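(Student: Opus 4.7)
The plan is to reduce the lc situation to a klt one where BCHM applies, using the hypothesis ``no lc center in $\bp$'' to push all MMP activity into the klt locus, away from the reduced boundary. First, I would take a $\Q$-factorial dlt modification $f\colon(Y,\Delta_Y)\to(X,\Delta)$ with $K_Y+\Delta_Y = f^{*}(K_X+\Delta)$. Since $f$ is birational, $\bp(K_Y+\Delta_Y)=f^{-1}\bp(K_X+\Delta)$, and lc centers of $(Y,\Delta_Y)$ are exactly the strata of $\lfloor\Delta_Y\rfloor$ and map onto lc centers of $(X,\Delta)$, so the hypothesis lifts. A good minimal model of $(Y,\Delta_Y)$ descends to one of $(X,\Delta)$, so we may assume $(X,\Delta)$ is already $\Q$-factorial dlt.

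Next, because $\bp(K_X+\Delta)=\B(K_X+\Delta-A)$ for some small ample $\Q$-divisor $A$ and no lc center lies in $\bp$, choose an effective $\Q$-divisor $D\sim_{\Q} K_X+\Delta-A$ whose support contains no lc center. For a sufficiently small $\epsilon>0$, $(X,\Delta+\epsilon D)$ is lc with the same lc centers as $(X,\Delta)$, and
\[ (1+\epsilon)(K_X+\Delta)\sim_{\Q} K_X+(\Delta+\epsilon D)+\epsilon A. \]
Further decreasing the coefficients of $\lfloor\Delta\rfloor$ by a tiny $\delta>0$ and absorbing the deficit into the ample summand produces a klt boundary $\Delta^{\dagger}$ such that $K_X+\Delta^{\dagger}+(\text{ample})$ is numerically proportional to $K_X+\Delta$. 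This is essentially the $(K_X+\Delta)$-mirror of Proposition \ref{prop:lc complement}, with the roles of $D$ and the lc-complement argument reversed.

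Now run the $(K_X+\Delta^{\dagger})$-MMP with scaling of the ample piece. By BCHM this terminates at a good minimal model. The crucial point is that every extremal ray appearing in the MMP is $(K_X+\Delta)$-negative, hence supported in $\bp(K_X+\Delta)$, which by hypothesis is disjoint from the lc centers of $(X,\Delta)$; combined with Fujino's special termination this ensures the lc centers are never contracted or flipped, so the original $(X,\Delta)$ is simultaneously carried to a minimal model $(X^{\min},\Delta^{\min})$. Semiampleness of $K_{X^{\min}}+\Delta^{\min}$ then follows from the basepoint-free theorem: off the lc centers we use BCHM-type nonvanishing on the klt perturbation, and the preserved hypothesis (no lc center in $\bp$ at each step) allows extension across $\lfloor\Delta^{\min}\rfloor$ by adjunction and induction on dimension.

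The main obstacle is the step that looks innocuous: showing that the klt-perturbed MMP genuinely realizes a $(K_X+\Delta)$-MMP and that the lc centers remain stable under it. This is exactly where the hypothesis is used essentially---it is what confines every extremal contraction and flip into $\bp(K_X+\Delta)$, which avoids the lc centers, so the two MMPs coincide on the loci that matter. A subtler technical point is verifying that the ``no lc center in $\bp$'' condition persists after each MMP step, which amounts to controlling the behavior of the augmented base locus and the pushforwards of lc centers across flips. Without this persistence, the induction argument for semiampleness across the lc boundary breaks.
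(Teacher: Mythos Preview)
The paper does not supply its own proof of this statement; Theorem~\ref{thrm:BH} is quoted as \cite[Corollary 1.2]{BH} and invoked as a black box in the proof of Theorem~\ref{thrm:main2-good anti min}. There is therefore nothing in the paper to compare your argument against.

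As a standalone sketch of the Birkar--Hu result, your outline has the right overall shape: pass to a $\Q$-factorial dlt model, use the $\bp$-hypothesis to produce an effective $D\sim_{\Q}K_X+\Delta-A$ whose support avoids every lc center, and then run an MMP whose activity is confined to $\bp(K_X+\Delta)$ and hence to the klt locus. The weak link is the ``klt perturbation'' step. Writing $K_X+\Delta^{\dagger}+(\text{ample})\sim_{\Q}(1+\epsilon)(K_X+\Delta)$ with $\Delta^{\dagger}=\Delta-\delta\lfloor\Delta\rfloor+\epsilon D$ is fine, but the putative ``ample'' summand is $\epsilon A+\delta\lfloor\Delta\rfloor$, which is only big, not ample; and if you fold $\delta\lfloor\Delta\rfloor$ back into the boundary to apply BCHM you are no longer klt. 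So the direct reduction to a klt MMP does not go through as written. Birkar--Hu do not in fact reduce to a single klt pair; their Corollary 1.2 is deduced from their Theorem 1.1, which compares minimal models of two lc pairs whose difference is supported away from the lc centers, and termination comes from special termination near $\lfloor\Delta\rfloor$ combined with BCHM on the klt open part. Your final paragraph correctly isolates the genuinely delicate point (persistence of the $\bp$-condition and stability of lc centers through the MMP), but the shortcut you propose to avoid it needs repair.
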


\begin{remark}\label{rem:two antiminmodels}
For an lc pair $(X,\Delta)$, there are two definitions of minimal models that differ slightly from each other. For a birational map $\varphi\colon X\dashrightarrow Y$, let $\Delta_Y\coloneqq\varphi_*\Delta$. The pair $(Y,\Delta_Y)$ is called a \textit{minimal model of} $(X,\Delta)$ (in the classical MMP sense) if the following conditions hold:
\begin{enumerate}
\item $\varphi$ is a contraction (i.e., there are no $\varphi^{-1}$-exceptional divisors),
\item $K_Y+\Delta_Y$ is nef and
\item $a(E;X,\Delta)\leq a(E;Y,\Delta_Y)$ holds for any prime divisors $E$ over $X$, where the inequality is strict if $E$ is $\varphi$-exceptional.
\end{enumerate}
It is expected that this model is obtained by running the minimal model program on the given lc pair \cite[Section 4.8]{Fuj-book}.

Another definition by Birkar--Shokurov is also considered often. For a birational map $\varphi\colon X\dashrightarrow Y$, let $\Delta_Y^{log}\coloneqq\varphi_*\Delta+\sum E_i$, where $E_i$ are $\varphi^{-1}$-exceptional prime divisors. The pair $(Y,\Delta_Y^{log})$ is called a \textit{minimal model of} $(X,\Delta)$ if
\begin{enumerate}
\item $(Y,\Delta_Y^{log})$ is a $\Q$-factorial dlt pair,
\item \textcolor{red}{$K_Y+\Delta_Y^{log}$} is nef and
\item $a(E;X,\Delta)\leq a(E;Y,\Delta_Y^{log})$ holds for any prime divisors $E$ over $X$, where the inequality is strict if $E$ is $\varphi$-exceptional.
\end{enumerate}
It is expected that this minimal model is obtained by running the minimal model program on a dlt blowup of the given lc pair. The minimal model in Theorem \ref{thrm:BH} is the minimal model in this sense.

We can run both minimal model programs and the respective flips exist. However, the terminations of the respective flips are still open problems. See \cite{Fuj-book} for more comprehensive details on both minimal model programs on lc pairs.

Despite the difference in the definitions, by \cite[Lemma 2.9(i)]{LT}, the existence of minimal models in both senses is equivalent. More precisely, there exists a minimal model in the classical MMP sense if and only if there exists a minimal model in the sense of Birkar--Shokurov. See also \cite[Theorem 1.7]{HH}.
\end{remark}

\begin{definition}\label{def:anti min model}
For a plc pair $(X,\Delta)$ and a birational map $\varphi\colon X\dashrightarrow Y$, let $\Delta_Y\coloneqq\varphi_*\Delta$. Then $\varphi\colon X\dashrightarrow Y$ (or the pair $(Y,\Delta_Y$)) is called a $-(K_X+\Delta)$-\textit{minimal model} if
\begin{enumerate}
  \item $\varphi$ is a contraction (i.e., $\varphi^{-1}$ has no exceptional divisors),
  \item $-(K_Y+\Delta_Y)$ is nef and
  \item $a(E;X,\Delta)\geq a(E;Y,\Delta_Y)$ for any prime divisor $E$ over $X$, where the strict inequality holds for any $\varphi$-exceptional divisor $E$ on $X$.
\end{enumerate}
If furthermore, $-(K_Y+\Delta_Y)$ is semiample, then $(Y,\Delta_Y)$ is called a \textit{good $-(K_X+\Delta)$-minimal model}.
\end{definition}

\begin{remark}
  A birational map $\varphi$ in Definition \ref{def:anti min model} satisfying the condition (3) is called a \textit{$-(K_X+\Delta)$-nonpositive map}. We note that the potential discrepancy $\overline{a}(E;X,\Delta)$ is preserved under a $-(K_X+\Delta)$-nonpositive birational map by \cite[Proposition 3.11]{CP}. Hence, the resulting model $(Y,\Delta_Y)$ in the above definition is necessarily an lc pair.
\end{remark}

\section{Proofs}\label{sect:proofs}

In this section, we give the proofs of the main results of this paper, Theorem \ref{thrm:main} and Theorem \ref{thrm:main2-good anti min}.
\smallskip

\begin{proof}[Proof of Theorem \ref{thrm:main}]
  If $(X,\Delta)$ is pklt, then $X$ is of Fano type by Theorem \ref{thrm:pklt+big=FT}. Therefore, we assume that $(X,\Delta)$ is not pklt.

  By the inclusions (\ref{nklt}) in Section \ref{sect:prelim}, we have the following inclusions
  $$\nklt(X,\Delta)\subseteq \pnklt(X,\Delta)\subseteq \nklt(X,\Delta)\cup \bm(-(K_X+\Delta)).$$
  Since $\pnklt(X,\Delta)\nsubseteq \bm(-(K_X+\Delta))$, we have $\pnklt(X,\Delta)=\nklt(X,\Delta)$ and hence $\pnklt(X,\Delta)$ consists only of lc centers of $(X,\Delta)$. Consequently, if $\Delta=0$, then $(X,\Delta)$ is a klt pair and $\nklt(X,\Delta)=\emptyset$. Thus, $(X,\Delta)$ is a pklt pair and $X$ is a Fano type variety by Theorem \ref{thrm:pklt+big=FT}. Therefore we may assume below that $\Delta\neq 0$.

  For a sufficiently small $\epsilon>0$, $-(K_X+(1-\epsilon)\Delta)=-(K_X+\Delta)+\epsilon\Delta$ is big. Therefore, by Theorem \ref{thrm:pklt+big=FT}, it suffices to prove that the pair $(X,(1-\epsilon)\Delta)$ is pklt. We prove that $\bar{a}(E;X,(1-\epsilon)\Delta)>-1$ for any prime divisor $E$ over $X$. This will imply that $(X,(1-\epsilon)\Delta)$ is pklt by Theorem \ref{thrm:wpklt=pklt}.

  First of all, since $X$ is a klt variety, the pair $(X,(1-\epsilon)\Delta)$ is klt so that $a(E;X,(1-\epsilon)\Delta)>-1$ for every prime divisor $E$ over $X$. Moreover, every lc center of $(X,\Delta)$ is contained in $\Supp(\Delta)$. Thus, for a prime divisor $E$ over $X$, if $C_X(E)$ is not contained in $\bm(-(K_X+(1-\epsilon)\Delta))$, then $\bar{a}(E;X,(1-\epsilon)\Delta)=a(E;X,(1-\epsilon)\Delta)>-1$.
  If $C_X(E)$ is contained in $\bm(-(K_X+(1-\epsilon)\Delta))$, then we prove $\bar{a}(E;X,(1-\epsilon)\Delta)>-1$ as follows. If $C_X(E)\subseteq \bm(-(K_X+\Delta))$, then $C_X(E)$ is not a plc center of $(X,\Delta)$ by the given condition (3). Hence, $\bar{a}(E;X,\Delta)>-1$. Moreover, we have the following inequalities
  \begin{align*}
    \bar{a}(E;X,(1-\epsilon)\Delta)&=a(E;X,(1-\epsilon)\Delta)-\sigma_E(-(K_X+(1-\epsilon)\Delta))\\
    &\geq a(E;X,\Delta)+\epsilon\mult_E f^*\Delta-\sigma_E(-(K_X+\Delta))-\epsilon\sigma_E(\Delta)\\
    &=\bar{a}(E;X,\Delta)+\epsilon(\mult_E f^*\Delta-\sigma_E(\Delta))\\
    &\geq \bar{a}(E;X,\Delta)>-1.
  \end{align*}

  If $C_X(E)\nsubseteq \bm(-(K_X+\Delta))$, then $\sigma_E(-(K_X+\Delta))=0$ and we have the following inequalities
  \begin{align*}
    \bar{a}(E;X,(1-\epsilon)\Delta)&=a(E;X,(1-\epsilon)\Delta)-\sigma_E(-(K_X+(1-\epsilon)\Delta))\\
    &\geq a(E;X,\Delta)+\epsilon\mult_E f^*\Delta-\sigma_E(-(K_X+\Delta))-\epsilon\sigma_E(\Delta)\\
    &= a(E;X,\Delta)+\epsilon(\mult_E f^*\Delta-\sigma_E(\Delta)).
  \end{align*}
  If $E$ is not an lc place of $(X,\Delta)$, then $a(E;X,\Delta)>-1$ and hence, $\bar{a}(E;X,(1-\epsilon)\Delta)>-1$. If $E$ is an lc place of $(X,\Delta)$, then $a(E;X,\Delta)=-1$. Note that by the condition $(2)$, we have $\sigma_E(\Delta)=0$. Furthermore, since every lc center of $(X,\Delta)$ is contained in $\Supp(\Delta)$, we have $\mult_E f^*\Delta-\sigma_E(\Delta)=\mult_E f^*\Delta>0$.
  Therefore, the inequality $\bar{a}(E;X,(1-\epsilon)\Delta)>-1$ holds.

  Hence, we have $\bar{a}(E;X,(1-\epsilon)\Delta)>-1$ for every prime divisor $E$ over $X$. Thus, the pair $(X,(1-\epsilon)\Delta)$ is pklt and $-(K_X+(1-\epsilon)\Delta)$ is big, which implies that $X$ is of Fano type by Theorem \ref{thrm:pklt+big=FT}.
\end{proof}

\begin{remark}\label{rem:pklt peff=FT}
  We note that if $(X,\Delta)$ is a $\Q$-factorial pklt pair with $-(K_X+\Delta)$ pseudoeffective and $\Delta$ big, then one can see that the pair $(X,(1-\epsilon)\Delta)$ is pklt and $-(K_X+(1-\epsilon)\Delta)$ is big. Hence, $X$ is a variety of Fano type by Theorem \ref{thrm:pklt+big=FT}.
\end{remark}

\begin{proof}[Proof of Theorem \ref{thrm:main2-good anti min}]
  By Proposition \ref{prop:lc complement}, for a sufficiently small rational number $\epsilon>0$, there exists an effective divisor $D$ such that $K_X+\Delta+(1+\epsilon)D\sim_{\Q} \epsilon D$ is big and the lc centers of $(X,\Delta+(1+\epsilon)D)$ coincide with the plc centers of $(X,\Delta)$. Since we have
  $$\bp(K_X+\Delta+(1+\epsilon)D)=\bp(D)=\bp(-(K_X+\Delta)),$$
  no lc centers of $(X,\Delta+(1+\epsilon)D)$ are contained in $\bp(K_X+\Delta+(1+\epsilon)D)$. By \cite[Corollary 1.2]{BH}, there exists a good minimal model $\varphi\colon X\dashrightarrow Y$ of $(X,\Delta+(1+\epsilon)D)$ in the sense of  Birkar--Shokurov. By Remark \ref{rem:two antiminmodels}, there also exists a minimal model $f\colon (X,\Delta+(1+\epsilon)D)\dashrightarrow (Y,\Delta_Y+(1+\epsilon)D_Y)$ in the classical sense, where $\Delta_Y\coloneqq f_*\Delta$ and $D_Y\coloneqq f_*D$.

  Now, we claim that $f\colon (X,\Delta)\dashrightarrow(Y,\Delta_Y)$ is a good $-(K_X+\Delta)$-minimal model. We note that by \cite[Corollary 5.6]{ELMNP-restricted volume}, $K_Y+\Delta_Y+(1+\epsilon)D_Y$ is log big, i.e., $(K_Y+\Delta_Y+(1+\epsilon)D_Y)\vert_W$ is big for any lc center $W$ of $(Y,\Delta_Y+(1+\epsilon)D_Y)$. Therefore, $K_Y+\Delta_Y+(1+\epsilon)D_Y$ is semiample by \cite[Theorem 1.1]{Fuj-bpf}. Thus, $(Y,\Delta_Y)$ is lc and $-(K_Y+\Delta_Y)$ is  semiample since
  $$K_Y+\Delta+(1+\epsilon)D_Y\sim_\Q K_Y+\Delta_Y+D_Y+\epsilon D_Y\sim_\Q \epsilon(-(K_Y+\Delta_Y))$$
  is semiample. The $-(K_X+\Delta)$-nonpositivity condition (3) can be checked easily as well.
\end{proof}

Theorem \ref{thrm:main2-good anti min} gives us no information on the existence of the (good) $-(K_X+\Delta)$-minimal models if some plc centers of $(X,\Delta)$ are contained in $\bp(-(K_X+\Delta))$. As of now, we are unaware of whether there exists a $-(K_X+\Delta)$-minimal model although we expect that we can still run partial $-(K_X+\Delta)$-MMP.

\section{Examples}\label{sect:example}

In this section, we prove by examples that the conditions in Theorems \ref{thrm:main} and \ref{thrm:main2-good anti min} cannot be weakened. First of all, it is easy to see that if $X$ is not a klt variety, then $X$ cannot be a Fano type variety.

\begin{example}[Condition (1) of Theorem \ref{thrm:main} is necessary, {\cite[Example 4.8]{CP}}]\label{example:not 1}
Let $S$ be a smooth rational surface as in $(\ast\ast\ast)$ of \cite[p.84]{Nik}. Then, $-K_S$ is nef, $\kappa(-K_S)=0$ and $\pnklt(S,0)=\emptyset$. Hence, the condition $(1)$ of Theorem \ref{thrm:main} is the only condition that is not satisfied and $S$ cannot be of Fano type since $-K_S$ is not big.
\end{example}

\begin{example}[Condition (2) of Theorem \ref{thrm:main} is necessary, {cf. \cite[Basic construction 5.1]{Gon}}]\label{example:not 2}
  Let $\pi\colon S\rightarrow \P^2$ be the surface obtained by blowing-up at $9$ very general points in $\P^2$ and $S\subset \P^N$ a projectively normal embedding. Let $\tilde{S}$ be the cone over $S$, $f\colon X\rightarrow \tilde{S}$ the blow-up at the vertex and $E$ the exceptional divisor of $f$. Moreover, since $-(K_X+E)$ is nef and big and $(X,E)$ is an lc pair, it is also a plc pair and hence
  $$\nklt(X,E)=\pnklt(X,E)=E.$$
  Also, one can see that $E\nsubseteq \bm(-(K_X+E))$ since $-(K_X+E)$ is nef and big. Note that $\bm(\Delta)=E$ is the only lc center, hence the condition $(2)$ in Theorem \ref{thrm:main} is the only condition that is not satisfied in this example. We know that $X$ is not of Fano type since $-(K_X+E)$ is not semiample.

\smallskip

Note that although the plc center $E$ is contained in $\bp(-(K_X+\Delta))$ and $E$ is not contained in $\B(-(K_X+\Delta))$ (as we will show below), there still exists a $-(K_X+\Delta)$-minimal model. However, the goodness of the $-(K_X+\Delta)$-minimal models in Theorem \ref{thrm:main2-good anti min} is not guaranteed.

\medskip

\noindent\textbf{Claim.} $E\subseteq\bp(-(K_X+\Delta))$

\medskip

    Since $-(K_X+E)$ is nef and big, we have
    $$\bp(-(K_X+E))=\mathrm{Null}(-(K_X+E)).$$
    Note that $E\cong S$ and we have the following equalities
    \begin{align*}
      -(K_X+E)^2\cdot E=(-(K_X+E)\vert_E)^2=(-K_E)^2=\left(-\pi^*K_{\P^2}-\sum_{i=1}^{9}E_i\right)^2=0,
    \end{align*}
    where $E_i$ are exceptional divisors over each point $p_i$.
    Hence, $E\subseteq \mathrm{Null}(-(K_X+E))$.

\medskip

\noindent\textbf{Claim.} $E\not\subseteq\B(-(K_X+\Delta))$

\medskip

    By adjunction formula and the isomorphism $E\cong S$, we have
    \begin{align*}
      h^0(E,-(K_X+E)\vert_E)=h^0(E,-K_E)&\cong h^0(S,-\pi^*K_{\P^2}-E_1-\cdots-E_9)\\
      &\cong h^0(\P^2, \mathcal{I}_{\{p_1,\dots,p_9\}}(3))\neq 0.
    \end{align*}
    On the other hand, we have the following short exact sequence.
    \begin{align*}
      0\rightarrow \mathcal{O}_X(-E)\rightarrow \mathcal{O}_X\rightarrow \mathcal{O}_E\rightarrow 0
    \end{align*}
    By tensoring $\mathcal{O}_X(-(K_X+E))$, we obtain the following long exact sequence.
    \begin{align*}
      0&\rightarrow H^0(X,-(K_X+E)-E)\rightarrow H^0(X,-(K_X+E))\rightarrow H^0(E, -(K_X+E)\vert_E)\\
      &\rightarrow H^1(X,-(K_X+E)-E)\rightarrow\dots
    \end{align*}
    Since $-(K_X+E)-E=K_X-2(K_X+E)$ and $-(K_X+E)$ is nef and big, one can apply Kawamata--Viehweg vanishing theorem and obtain $H^1(X,-(K_X+E)-E)=0$. This gives us a surjection $H^0(X,-(K_X+E))\rightarrow H^0(E,-(K_X+E)\vert_E)\neq 0$. Since this restriction map is not a zero map, $E$ is not contained in the base locus of $\lvert -(K_X+E)\rvert$. Hence, we have $E\nsubseteq \B(-(K_X+E))$.

\end{example}

\begin{example}[Condition (3) of Theorem \ref{thrm:main} is necessary]\label{example:not 3}
  We use the same notations from Example \ref{example:not 2}. The pair $(X,(1-\epsilon)E)$ for sufficiently small $\epsilon$ satisfies all the conditions except $(2)$. In this case, we have $\nklt(X,(1-\epsilon)E)=\emptyset$, $\pnklt(X,(1-\epsilon)E)=\bm(-(K_X+(1-\epsilon)E))=E$.
\end{example}



\begin{thebibliography}{ELMNP1}
\bibitem[BCHM]{bchm} C. Birkar, P. Cascini, C. Hacon, and J. McKernan,
\textit{Existence of minimal models for varieties of log general type},  J. Amer. Math. Soc. 23, no. 2, 405--468 (2010).

\bibitem[BH]{BH} C. Birkar and Z. Hu. \textit{Log canonical pairs with good augmented base loci}, Compos. Math. \textbf{150}, 579--592 (2014).

\bibitem[CD]{CD} S. Cacciola and L. Di Biagio, \textit{Asymptotic base loci on singular varieties}, Math. Z. \textbf{275} no. 1--2, 151--166 (2013).

\bibitem[CJK]{CJK} S. Choi, S. Jang, and D. Kim, \textit{Adjoint asymptotic multiplier ideal sheaves}, \href{https://arxiv.org/abs/2311.07441}{arXiv:2311.07441}, (2024).

\bibitem[CP]{CP} S. Choi and J. Park, \textit{Potentially non-klt locus and its applications}, Math. Ann. 366(1), 141--166 (2016).

\bibitem[ELMNP1]{ELMNP-base loci} L. Ein, R. Lazarsfeld, M. Musta\c{t}\u{a}, M. Nakamaye, and M. Popa, \textit{Asymptotic invariants of base loci}, Ann. Inst. Fourier (Grenoble) \textbf{56}(6) 1701--1734, (2006).

\bibitem[ELMNP2]{ELMNP-restricted volume} L. Ein, R. Lazarsfeld, M. Musta\c{t}\u{a}, M. Nakamaye, and M. Popa, \textit{Restricted volumes and base loci of linear series}, Amer. J. Math., 131(3): 607--651, (2009).

\bibitem[Fuj1]{Fuj-bpf} O. Fujino, \textit{Effective base point free theorem for log canonical pairs—Koll\'{a}r type theorem}, Tohoku Math. J. (2) \textbf{61} (2009), no. 4, 475--481.

\bibitem[Fuj2]{Fuj-book} O. Fujino, (2017). \textit{Foundations of the minimal model program} (Vol. 35). Tokyo: Mathematical Society of Japan.

\bibitem[Gon]{Gon} Y. Gongyo, \textit{On weak Fano varieties with log canonical singularities}, J. reine angew. Math. \textbf{665}, 237--252 (2012).

\bibitem[HH]{HH} K. Hashizume and Z. Hu, \textit{On minimal model theory for log abundant lc pairs}, J. Reine Angew. Math., \textbf{767} (2020), 109--159.

\bibitem[Les]{Les} J. Lesieutre, The diminished base locus is not always closed, Compos. Math. {\bf 150} (2014), no.~10, 1729--1741.

\bibitem[LT]{LT} V. Lazi\'{c} and N. Tsakanikas, \textit{On the existence of minimal models for log canonical pair}, Publ. Res. Inst. Math. Sci. \textbf{58}, (2022), no. 2, 311--339.

\bibitem[Nak]{Nak} N. Nakayama, (2004). \textit{Zariski-decomposition and abundance} (Vol. 14). Tokyo: Mathematical Society of Japan.

\bibitem[Nik]{Nik} V.V. Nikulin, \textit{A remark on algebraic surfaces with polyhedral Mori cone}, Nagoya Math. J. \textbf{157}, 73--92 (2000).

\bibitem[PS]{PS} Yu. G. Prokhorov and V.V. Shokurov, \textit{Towards the second main theorem on complements}, J. Algebraic Geom. 18 (2009), no. 1, 151--199.

\end{thebibliography}
\end{document}